\DeclareMathAlphabet{\mathcal}{OMS}{cmsy}{m}{n}
\DeclareMathOperator{\Hom}{Hom}
\DeclareMathOperator{\Image}{Im}
\DeclareMathOperator{\obj}{obj}
\DeclareMathOperator{\Ext}{Ext}
\DeclareMathOperator{\depth}{depth}
\DeclareMathOperator{\mmod}{mod}
\DeclareMathOperator{\CM}{CM}
\DeclareMathOperator{\pro}{proj}
\DeclareMathOperator{\Gpro}{Gproj}
\DeclareMathOperator{\injj}{inj}
\newcommand{\proj}{\ensuremath{\pro}\xspace}
\newcommand{\inj}{\ensuremath{\injj}\xspace}
\newcommand{\Gproj}{\ensuremath{\Gpro}\xspace}
\newcommand{\m}{\ensuremath{\mathrm{m}}\xspace}
\newcommand{\uHom}{\underline{\Hom}}
\newcommand{\A}{\ensuremath{\mathscr{A}}\xspace}
\newcommand{\E}{\ensuremath{\mathscr{E}}\xspace}
\newcommand{\D}{\ensuremath{\mathscr{D}}\xspace}
\newcommand{\EN}{\ensuremath{\E/\N}\xspace}
\newcommand{\uE}{\ensuremath{\E/\N}\xspace}
\newcommand{\uEm}{\ensuremath{\D_\N}\xspace}
\newcommand{\F}{\ensuremath{\mathscr{E}}\xspace}
\newcommand{\uF}{\ensuremath{\underline{\F}}\xspace}
\newcommand{\uFm}{\ensuremath{\underline{\D}}\xspace}
\newcommand{\GR}{\ensuremath{\mathcal{G}(R)}\xspace}
\newcommand{\modR}{\ensuremath{\mmod R}\xspace}
\newcommand{\N}{\ensuremath{\mathscr{N}}\xspace}
\newcommand{\T}{\ensuremath{\mathscr{T}}\xspace}
\newcommand{\calI}{\ensuremath{\mathcal{I}}\xspace}
\newcommand{\calS}{\ensuremath{\mathcal{S}}\xspace}
\newcommand{\calSm}{\ensuremath{\mathcal{S}'}\xspace}
\newcommand{\mono}{monomorphism\xspace}
\newcommand{\epi}{epimorphism\xspace}
\newcommand{\adm}{admissible\xspace}
\newcommand{\amono}{\adm monomorphism\xspace}
\newcommand{\aepi}{\adm epimorphism\xspace}
\newcommand{\PO}{pushout\xspace}
\newcommand{\PB}{pullback\xspace}
\newcommand{\udl}{\underline}
\theoremstyle{definition}
\newtheorem{definition}[subsection]{Definition}
\newtheorem{remark}[subsection]{Remark}
\newtheorem{remarks}[subsection]{Remark}
\newtheorem{examples}[subsection]{Example}
\theoremstyle{plain}
\newtheorem{theorem}[subsection]{Theorem}
\newtheorem{proposition}[subsection]{Proposition}
\newtheorem{lemma}[subsection]{Lemma}
\newtheorem{corollary}[subsection]{Corollary}
\tikzset{middlearrow/.style={
    decoration={markings,
      mark= at position 0.2 with {\arrow{#1}} ,
    },
    postaction={decorate}
  }
}
\newcommand{\defi}[1]{\begin{definition}#1\end{definition}}
\newcommand{\bmtx}[1]{\begin{bmatrix}#1\end{bmatrix}}
\newcommand{\bmtxt}[1]{\Bigl[ \begin{smallmatrix}#1 \end{smallmatrix} \Bigr]}
\newcommand{\bmtxo}[1]{\bigl[ \begin{smallmatrix}#1 \end{smallmatrix} \bigr]}
\newcommand{\tikzcs}[1]{\begin{tikzcd}[ampersand replacement=\&,cramped, sep=scriptsize]#1\end{tikzcd}}
\title{Classifying subcategories in quotients of exact categories}
\author{Emilie Arentz-Hansen}
\address{Department of Mathematical Sciences, NTNU, NO-7491
  Trondheim, Norway}
\email{emilie\_ah@hotmail.com}
\begin{document}

\begin{abstract}
We classify certain subcategories in quotients of exact categories. In particular, we classify the triangulated and thick subcategories of an algebraic triangulated category, i.e. the stable category of a Frobenius category.
\end{abstract}

\subjclass[2010]{18E10, 18E30}
\keywords{Exact categories, Frobenius categories, algebraic triangulated categories}
\thanks{Part of this work was done while I visited Ryo Takahashi at Nagoya University. I would like to thank him for his hospitality and guidance.}

\maketitle

\section{Introduction}

Exact categories were introduced by Quillen in 1972, and have since played an important part in various areas of mathematics. They serve as a categorification of large parts of classical homological algebra. For example, exact sequences, projective and injective objects are natural notions within this framework.

It turns out that exact categories are intimately related to triangulated categories, via the so-called Frobenius categories. These are exact categories with enough projective and injective objects, and in which the two classes of objects coincide. Given such a category, one may form its stable category, which is triangulated in a very natural way. The triangulated categories that arise in this way are called algebraic, and the ones that appear naturally in homological algebra (homotopy categories of complexes, derived categories) are all of this form.

Given a triangulated category, one may ask for a classification of its thick or just triangulated subcategories. In this paper, we provide such a classification for the algebraic triangulated categories, in terms of certain subcategories in the ambient exact categories. In fact, we classify subcategories in more general quotients of exact categories, not just stable categories of Frobenius categories. 

\section{Preliminaries}

We recall the notions of exact categories, quotient categories and Frobenius categories. The reader should consult \cite{Buhler}, \cite{Happel} and \cite{HolZat} for a thorough introduction.

\begin{definition}
Let \E be an additive category and $A \xrightarrow{f} B \xrightarrow{g} C$ a sequence in \E. We call $(f, g)$ a {\bf kernel-cokernel pair} if $f$ is a kernel of $g$ and $g$ is a cokernel of $f$. Let \calS be a family of kernel-cokernel pairs in \E. If $(f, g) \in \calS$, then we call $f$ an {\bf admissible monomorphism} and $g$ an {\bf admissible epimorphism}. 
\end{definition}

We use the notations \tikzcs{A \arrow[r, "f", tail]  \&  B} and \tikzcs{B \arrow[r, "g", two heads]  \&  C} to specify that $f$ is an admissible monomorphism and $g$ is an admissible epimorphism, respectively.

\begin{definition} \label{def ex cat}
Let \E  be an additive category and $\mathcal{S}$ a family of kernel-cokernel pairs in \E. Assume that \calS is closed under isomorphisms and satisfies the following:
\begin{itemize}
\item[Ex0\phantom{$^{op}$}] $1_0$ is an \aepi, where 0 denotes the zero object.
\item[Ex1\phantom{$^{op}$}] The class of admissible epimorphisms is closed under composition.

\item[Ex2\phantom{$^{op}$}]  Admissible epimorphisms are stable under pullback along arbitrary morphisms: 
\begin{displaymath}
  \begin{tikzcd}
   B' \arrow[r, "g'", dashed, two heads]  \arrow[d, "h'", dashed]  \arrow[dr, phantom, "\scriptscriptstyle PB" ] &  C' \arrow[d, "h"]\\
		B \arrow[r, "g", two heads]  &  C
  \end{tikzcd}
\end{displaymath}

\item[Ex2$^{op}$] Admissible monomorphisms are stable under pushout along arbitrary morphisms: 
\begin{displaymath}
  \begin{tikzcd}
   A \arrow[r, "f", tail]  \arrow[d, "h"] \arrow[dr, phantom, "\scriptscriptstyle PO" ] &  B \arrow[d, "h'", dashed]\\
		A' \arrow[r, "f'", dashed, tail]  &  B'
  \end{tikzcd}
\end{displaymath}
\end{itemize}
In this case \calS is  an {\bf exact structure} on \E and the pair $(\E, \calS)$ is an {\bf exact category}. The elements of \calS are called {\bf short exact sequences}.
\end{definition}

From now on we fix an exact category $(\E, \calS)$. 

\begin{remarks} \label{rem ex cat}
{\bf (1) } This definition is equivalent to the classical definition given by Quillen in \cite{Quillen}: Keller proves this in \cite[Appendix A]{Keller}. Quillen's definition gives Ex1$^{op}$, i.e. that the class of admissible monomorphisms is closed under composition, and that for all $A, B$ in \E the sequence 
\begin{tikzcd}[ampersand replacement=\&, cramped, sep= 2.2 em]
	A \ar[r, "\bmtxt{1 \\ 0}", tail] \& A \oplus B \ar[r, "\bmtxo{0 & 1}", two heads] \& B
\end{tikzcd}
is short exact.

{\bf (2) } Any isomorphism is both an \amono and an \aepi.

{\bf (3) } 
An \aepi is in particular an \epi since it is a cokernel, and an \amono is a \mono since it is a kernel. 
\end{remarks}

Recall that a subcategory \D of 
\E is {\bf extension closed} if whenever 
	\tikzcs{X \arrow[r, "f", tail] \&  Y \arrow[r, "g", two heads]  \&  Z}
 is in \calS with $X, Z \in \D$, then $Y \in \D$. The following result shows that such subcategories are exact themselves.

\begin{proposition} \label{prop:exact subcat is exact}
Let \D be a full subcategory of 
\E with $0\in \D$. Assume that \D is extension closed and define
	$$\calSm:= \left\{\tikzcs{ X \arrow[r, "f", tail] \&  Y \arrow[r, "g", two heads]  \&  Z}  \in \calS \ \ \middle| \ \ X,Y,Z \in \D \right\}.$$ 
Then $(\D, \calS')$ is an exact category and $\D$ closed under isomorphisms. 
\end{proposition}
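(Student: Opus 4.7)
My plan is to verify that $\D$ is closed under isomorphisms and then check each of the axioms Ex0, Ex1, Ex1$^{op}$, Ex2, Ex2$^{op}$ for $(\D, \calSm)$. The uniform idea is that the corresponding property already holds in $\E$, so the only task is to confirm that any objects produced by the axiom lie in $\D$; this will be handled by extension closure.

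For isomorphism closure of $\D$, given an isomorphism $f : X \to Y$ in $\E$ with $X \in \D$, I would note that the split sequence $X \xrightarrow{1_X} X \to 0$ (in $\calS$ by Remark \ref{rem ex cat}(1) with $B = 0$) is isomorphic via the triple $(1_X, f, 1_0)$ to $X \xrightarrow{f} Y \to 0$. Since $\calS$ is closed under isomorphism the latter lies in $\calS$, and as its endpoints $X$ and $0$ lie in $\D$, extension closure yields $Y \in \D$.

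Fullness of $\D$ immediately makes $\calSm$ a family of kernel--cokernel pairs in $\D$, transfers universal properties of pullbacks and pushouts from $\E$ to $\D$, and together with the isomorphism closure of $\D$ shows that $\calSm$ is closed under isomorphism. Ex0 is immediate since $0 \in \D$. For Ex2 and Ex2$^{op}$, I would form the pullback (resp.\ pushout) in $\E$ supplied by that axiom; the pulled-back \aepi (resp.\ pushed-out \amono) has the same kernel (resp.\ cokernel) as the original, so the new corner sits in a short exact sequence in $\calS$ between two objects of $\D$ and therefore lies in $\D$ by extension closure.

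The principal obstacle is Ex1 and its dual Ex1$^{op}$. Given composable \aepis $g_1 : Y_1 \twoheadrightarrow Y_2$ and $g_2 : Y_2 \twoheadrightarrow Z_2$ in $\D$, arising from short exact sequences $X_1 \to Y_1 \to Y_2$ and $X_2 \to Y_2 \to Z_2$ in $\calSm$, Ex1 in $\E$ only provides a short exact sequence $K \to Y_1 \to Z_2$ in $\calS$, and one must still show $K \in \D$. To achieve this, I would invoke the standard $3 \times 3$/Noether lemma for exact categories (cf.\ \cite{Buhler}): pulling $X_2 \hookrightarrow Y_2$ back along $g_1$ identifies the kernel $K$ of the composition $g_2 g_1$ and yields a short exact sequence $X_1 \to K \to X_2$ in $\calS$. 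Since $X_1, X_2 \in \D$, extension closure then forces $K \in \D$, and the sequence $K \to Y_1 \to Z_2$ lies in $\calSm$, completing the verification.
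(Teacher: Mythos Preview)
Your proposal is correct and follows essentially the same route as the paper: isomorphism closure via the short exact sequence $X \xrightarrow{f} Y \to 0$, Ex2/Ex2$^{op}$ by forming the pullback/pushout in $\E$ and using extension closure to capture the new corner, and Ex1 by identifying the kernel of the composite via a pullback and placing it in a short exact sequence between two objects of $\D$. One small omission: you should explicitly check that $\D$ is additive (Definition~\ref{def ex cat} requires this), which the paper does by observing that $X \rightarrowtail X\oplus Y \twoheadrightarrow Y$ lies in $\calS$, so extension closure puts $X\oplus Y$ in $\D$. The only other difference is packaging: for Ex1 the paper first establishes Ex2 for $(\D,\calSm)$ and then invokes it to conclude the pullback object lies in $\D$, while you unpack the same computation as an instance of the $3\times 3$/Noether lemma yielding $X_1 \rightarrowtail K \twoheadrightarrow X_2$; the arguments are equivalent.
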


\begin{proof}
{\it \D is additive:} Since \D is a full subcategory containing 0 of an an additive category, \D is preadditive. For any objects $X,Y$ in \D, 
	\tikzcs{X \arrow[r, tail] \&  X\oplus Y \arrow[r, two heads]  \&  Y  } 
is short exact. Hence $X \oplus Y \in \D$ since \D is extension closed. Thus \D is additive.

{\it Ex2$^{op}$:} Let \tikzcs{A \arrow[r, "f", tail] \&  B \arrow[r, "g", two heads] \& C} be in \calSm and let $h:A\to D$ be in \D. Since \E is exact, the \PO along $f$ and $h$ exists, thus we get the following commutative diagram in \E:
\begin{displaymath}
  \begin{tikzcd}
 	A \arrow[r, "f", tail]  \arrow[d, "h"] \arrow[rd, phantom, "\scriptscriptstyle  PO"] &  B \arrow[r, "g", two heads] \arrow[d, "h'", dashed] &  C  \arrow[d, equal] \\
	D \arrow[r, "f'", dashed, tail]  &  P \arrow[r, "g'", dashed, two heads] &  C  
  \end{tikzcd}
\end{displaymath}
By \cite[Proposition 2.12]{Buhler} the second row is in \calS. Since \D is extension closed and $D, C$ are in \D, so is P. Thus the \PO along $f$ and $h$ lies in \D. Ex2 is done dually. 

{\it Ex1:} Let \tikzcs{A \arrow[r, "f", tail] \&  B \arrow[r, "g", two heads] \& C} and \tikzcs{P \arrow[r, "p", tail] \&  B' \arrow[r, "h", two heads] \& B} be in \calSm. Note that $gh$ is an \aepi in $(\E, \calS)$ since the category is exact. Hence $gh$ is an \aepi in $(\D, \calSm)$  as well if it has a kernel which lies in \D. By Ex2 there exists a \PB along $h$ and $f$ with objects in \D:
\begin{displaymath}
  \begin{tikzcd}
	A' \arrow[r, "f'"]  \arrow[d, "h'"]  \arrow[dr, phantom, "\scriptscriptstyle PB" ] &  B' \arrow[r, "gh"] \arrow[d, "h", two heads]  &  C  \arrow[d, equal]\\
	A \arrow[r, "f", tail]  &  B \arrow[r, "g", two heads] &  C
  \end{tikzcd}
\end{displaymath}
It is easy to show that $f'$ is the kernel of $gh$, hence $(f',gh)\in\calSm$.

{\it Closed under isomorphisms:} Let $f: X \to Y$ be an isomorphism and assume that $X$ lies in \D. Then 
	\tikzcs{X \arrow[r, "f", tail] \&  Y \arrow[r, "", two heads]  \&  0 }
is in \calS, hence $Y\in \D$ since \D is extension closed.
\end{proof}

We say that the exact structure on \D is {\bf induced by the exact structure on \E}. A typical example occurs when \E is an abelian category (with the standard exact structure) and \D is a full, extension closed subcategory containing the zero object. Indeed, for essentially small categories, this is equivalent to the definition of exact categories.


Next we consider quotient categories. Recall first that a collection \calI of morphisms in a preadditive category is a (two-sided) {\bf ideal} if
\begin{itemize}
	\item[(i)] $\calI(X, Y) := \calI \cap \Hom(X,Y)$ is a subgroup of the abelian group $\Hom(X,Y)$
	, and
	\item[(ii)] whenever $f \in \Hom(X,Y)$, $g \in \calI(Y, Z)$, $h \in \Hom(Z, W)$, then $hgf \in \calI(X,W)$.
\end{itemize}

\begin{definition}
Let \A be a preadditive category and \calI an ideal of \A. We define the {\bf quotient category} $\A / \calI$ by
\begin{samepage}
	$$\obj \A / \calI := \obj \A$$
	$$\Hom_{\A / \calI} (X,Y) := \Hom_\A (X,Y) / \calI(X,Y).$$
\end{samepage}
This means that $\A / \calI$  has the same objects as \A and that the morphisms in $\A / \calI$ are the equivalence classes of the morphisms in \A (two morphisms are equivalent if their difference is in \calI). The equivalence class of a morphism $f: X \to Y$ in \A  will be denoted by $\underline f$ in $\A / \calI$. 
When it is clear from the context, the notation $\uHom_\A(X,Y)$, or simply $\uHom(X,Y)$, will be used instead of $\Hom_{\A / \calI} (X,Y)$. 
\end{definition}

Composition in the quotient category $\A / \calI$ is well defined, and associativity follows from that in \A. Thus $\A / \calI$ is indeed a category. 

\begin{proposition}
Let \A be a (pre)additive category and \calI an ideal of \A. Then the quotient category $\A / \calI$ is also (pre)additive.
\end{proposition}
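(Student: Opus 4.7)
The plan is to verify preadditivity of $\A/\calI$ first, and then upgrade to additivity if $\A$ is additive.

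For preadditivity: since $\calI(X,Y)$ is a subgroup of the abelian group $\Hom_\A(X,Y)$ by condition (i) of an ideal, the quotient $\uHom(X,Y) = \Hom_\A(X,Y)/\calI(X,Y)$ is itself an abelian group. The nontrivial point is to check that composition in $\A/\calI$ is well-defined and bilinear. Given $f,f' \in \Hom_\A(X,Y)$ with $f - f' \in \calI$ and $g, g' \in \Hom_\A(Y,Z)$ with $g - g' \in \calI$, I would write
\[
gf - g'f' = g(f - f') + (g - g')f',
\]
and observe that condition (ii) of an ideal makes each summand lie in $\calI(X,Z)$. This simultaneously shows that $\underline{g}\,\underline{f} := \underline{gf}$ is well-defined on equivalence classes and that it is $\mathbb{Z}$-bilinear in each argument, since the linearity identities $(g_1 + g_2)f = g_1 f + g_2 f$ and $g(f_1 + f_2) = gf_1 + gf_2$ descend from $\A$.

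For additivity: assuming $\A$ has a zero object $0$ and all binary biproducts, I would show these persist in $\A/\calI$. Since $\Hom_\A(0,X) = 0 = \Hom_\A(X,0)$, the same holds after quotienting by $\calI$, so $0$ remains a zero object in $\A/\calI$. For biproducts, given $X \oplus Y$ in $\A$ with the usual structure morphisms $\iota_X, \iota_Y, \pi_X, \pi_Y$, I would verify that their classes satisfy the biproduct identities $\underline{\pi_X}\,\underline{\iota_X} = \underline{1_X}$, $\underline{\pi_Y}\,\underline{\iota_Y} = \underline{1_Y}$, $\underline{\pi_X}\,\underline{\iota_Y} = 0$, $\underline{\pi_Y}\,\underline{\iota_X} = 0$, and $\underline{\iota_X}\,\underline{\pi_X} + \underline{\iota_Y}\,\underline{\pi_Y} = \underline{1_{X \oplus Y}}$; these follow by applying the canonical functor $\A \to \A/\calI$ (which is the identity on objects and sends $f$ to $\underline{f}$) to the corresponding identities in $\A$, since this functor preserves sums and composition.

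The main obstacle, such as it is, lies in the well-definedness and bilinearity of composition — this is the only place where both conditions (i) and (ii) in the definition of an ideal are actually used, and the displayed decomposition of $gf - g'f'$ is the key computation. Once this is in hand, everything else is a formal consequence of the fact that the canonical map $\Hom_\A(X,Y) \to \uHom(X,Y)$ is a surjective group homomorphism compatible with composition, so every equational identity among finite sums and compositions in $\A$ descends automatically to $\A/\calI$.
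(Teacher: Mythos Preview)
Your proof is correct and follows essentially the same approach as the paper: hom-sets are abelian as quotients of abelian groups, bilinearity descends from $\A$, the zero object survives, and the biproduct identities pass to equivalence classes. The only difference is organizational---the paper records well-definedness of composition just before the proposition and then handles bilinearity inside the proof, whereas you combine both into the single displayed decomposition of $gf - g'f'$; your version is slightly more explicit but not substantively different.
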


\begin{proof}
Since $\calI(X,Y)$ is a subgroup of the abelian group $\Hom(X,Y)$, the factor group 
	$$
	\uHom(X,Y) = \Hom_\A (X,Y) / \calI(X,Y)$$ 
is also an abelian group. The bilinearity of composition in $\A / \calI$ follows directly from the bilinearity of composition in \A. If 0 is a zero object in \A, then it is a zero object in $\A / \calI$ as well since 
$\uHom(0,X)$ and $\uHom(X,0)$ will contain only one morphism. If a biproduct of $X, Y$ in \A is given by
	$$X \overset{i_1}{\underset{p_1}{\rightleftarrows}} X \oplus Y  \overset{i_2}{\underset{p_2}{\leftrightarrows}} Y$$
then it is trivial to see that a biproduct of $X, Y$ in $\A / \calI$ is given by
	\begin{equation*}
	X \overset{\udl{i_1}}{\underset{\udl{p_1}}{\rightleftarrows}} X \oplus Y  \overset{\udl{i_2}}{\underset{\udl{p_2}}{\leftrightarrows}} Y.
\qedhere
\end{equation*}
\end{proof}

Now let \A be a preadditive category and $\N$ a (full) subcategory which is closed under finite direct sums. Define $\calI(X,Y) \subseteq \Hom(X,Y)$ to the collection of be all morphisms which factor through some object in $\N$, and denote by \calI the union of all $\calI(X,Y)$. Then \calI is an ideal of \A, and we denote the corresponding quotient category $\A / \calI$ by $\A / \N$.


The quotient categories we typically have in mind are the stable categories of Frobenius categories. Recall that an object $P$ in \E  is {\bf \calS-projective} if for all admissible epimorphisms \tikzcs{g:Y\arrow[r, two heads]\& Z} and for all morphisms $a: P \to Z$ there exists a (not necessarily unique) morphism $b: P\to Y$ such that $a = gb$: 
\begin{displaymath}
  \begin{tikzcd}
   & P \arrow[rd, "a"]  \arrow[d, "\exists b"', dashed]  &&  \\
	X \arrow[r, "f", tail] & Y \arrow[r, "g", two heads, near start]  &  Z & \in \calS
  \end{tikzcd}
\end{displaymath}
Dually, an object $I$ is {\bf \calS-injective} if for all admissible monomorphisms \tikzcs{f:X\arrow[r, tail]\& Y} and for all morphisms $a: X \to I$ there exists a morphism $b: Y\to I$ such that $a = bf$: 
\begin{displaymath}
  \begin{tikzcd}
		X \arrow[r, " f", tail] \arrow[rd, "a"'] &  Y \arrow[d, "\exists b", dashed]  \arrow[r, "g", two heads]  &  Z & \in \calS\\
      & I &&
  \end{tikzcd}
\end{displaymath}

Furthermore, \E has {\bf enough \calS-projectives} if for all objects $X$ there exists an admissible epimorphism \tikzcs{g: P \arrow[r, two heads] \& X} with $P$ an \calS-projective object, and {\bf enough \calS-injectives} if for all objects $X$  there exists an admissible monomorphism \tikzcs{f: X \arrow[r, tail] \& I} with $I$ an \calS-injective object.

\begin{definition}
An exact category $(\F, \calS)$ is {\bf Frobenius} if it has enough \calS-projectives, enough \calS-injectives and if $\proj \F = \inj \F$, where \proj \F and \inj \F denote the collection of $\calS$-projective and $\calS$-injective objects, respectively. The {\bf stable category} associated with \F is the quotient category $\uF:=\F / \inj \F$. We use $\uHom(X, Y)$ instead of $\Hom_{\uF}(X, Y)$ to denote the set of morphisms from $X$ to $Y$ in \uF. The equivalence class of a morphism $f: X \to Y$ in \F will be denoted by $\underline f$ in \uF. 
\end{definition}

We assume throughout the rest of this section that our exact category \E is Frobenius.

\begin{remark} \label{rem:factors through I(X)}
Assume that $f: X \to Y$ factors through some \calS-injective object $J$ as in the diagram below. 
Let \tikzcs{\mu : X \arrow[r, tail]\& I} be an \amono, $I$ an \calS-injective. Then there exists $\alpha: I \to Y$ such that $\alpha \mu = f$.
\begin{displaymath}
  \begin{tikzcd}
		X \arrow[rr, "f"] \arrow[dr, "\mu", tail] \arrow[ddr, "g"'] &&  Y \\
      & I \arrow[ru, "\alpha", dotted] \arrow[d, "\beta", dashed] &\\
      & J \arrow[ruu, "h"']
  \end{tikzcd}
\end{displaymath}
Namely, since $J$ is an \calS-injective object and $\mu$ is an \amono, there exists $\beta:~I\to~J$ such that $\beta \mu = g$. Let $\alpha := h \beta$. Then $\alpha \mu = h \beta \mu = h g = f$. 
\end{remark}


The stable category \uF is a triangulated category. 
The autoequivalence $T: \uF \to \uF$ is constructed by choosing, for all objects $X$, a sequence \tikzcs{X \arrow[r, tail]  \&  I \arrow[r, two heads]  \&  X'} in \calS for with $I$ an \calS-injective object and defining $TX:= X'$. This assignment is functorial in the stable category: given a morphism $f: X \to Y$ there exist morphisms such that the following diagram commutes
\begin{displaymath}
  \begin{tikzcd} 
	X \arrow[r, "\mu", tail]  \arrow[d,"f"] &  I \arrow[r, "\pi", two heads]  \arrow[d, "I(f)", dashed] &  X'  \arrow[d, "f'", dashed]\\
	Y \arrow[r, "\mu'", tail] &  I' \arrow[r, "\pi'", two heads]  &  Y'  
  \end{tikzcd}
\end{displaymath}
We define $T(f) := f'$: and it is not hard to see that $\udl{T(f)}$ is uniquely determined in \uF.

Now take a morphism $f :X\to Y$  in \F and consider the following diagram
\begin{displaymath}
  \begin{tikzcd}
 	X \arrow[r, "\mu", tail] \arrow[d, "f"] \arrow[dr, phantom, "\scriptscriptstyle PO" ] & I \arrow[r, "\pi", two heads] \arrow[d, "\overline{f}"]  & TX   \arrow[d, equal] \\
	 Y \arrow[r, "g", tail]  &  C_f \arrow[r, "h", two heads, dashed] & TX  
  \end{tikzcd}
\end{displaymath}
where $C_f$ is the pushout along $\mu$ and $f$. By \cite[Proposition 2.12]{Buhler} there exists a morphism $h: C_f \to TX$ such that the diagram commutes and $(g, h) \in \calS$. We call the triangle $X \xrightarrow{f} Y\xrightarrow{g} C_f \xrightarrow{h} TX$ and its image in \uF a {\bf standard triangle}, and we denote by $\Delta$ the collection of all triangles in \uF which are isomorphic to a standard triangle. The triple $(\E, T, \Delta)$ is then a triangulated category.

\section{Classifying subcategories}

As in the previous section, we fix an exact category $(\E, \calS)$. Furthermore, we fix a full subcategory \N which is closed under finite direct sums, and let $\E / \N$ be the quotient category. We denote morphisms in \EN by $\udl{f}$ (the equivalence class of a morphism $f$ in \E). Furthermore, denote by $\calS_\N$ the collection of all sequences $X \xrightarrow{\udl{f}} Y \xrightarrow{\udl{g}} Z$ in \EN for which there exists an isomorphism of sequences
\begin{displaymath}
  \begin{tikzcd}
	X  \arrow[r, "\udl{f}"] \arrow[d, "\cong"] \arrow[dr, phantom, "\circlearrowleft" ] &  Y \arrow[r, "\udl{g}"] \arrow[d, "\cong"]  \arrow[dr, phantom, "\circlearrowleft" ] &  Z \arrow[d, "\cong"]\\
	X'  \arrow[r, "\udl{f}'"] &  Y' \arrow[r, "\udl{g}'"] &  Z'
  \end{tikzcd}
\end{displaymath}
with \tikzcs{ X' \arrow[r, "f'", tail] \& Y' \arrow[r, "g'", two heads] \& Z'} $\in \calS$.

We shall establish a bijection between certain subcategories of the exact category \E  and the quotient category $\EN$.

\begin{definition} \label{def:complete and thick subcat}
A nonempty subcategory $\D$ of \E (respectively, of \EN) is a {\bf complete subcategory} if the following hold
\begin{itemize}
	\item[(i)] \D is a full subcategory, 
	\item[(ii)] 2 out of 3: If \tikzcs{X \arrow[r, "f", tail] \&  Y \arrow[r, "g", two heads]  \&  Z}  in \calS (respectively, in $\calS_\N$) and two of  $X, Y, Z$ are in \D, then so is the third.
\end{itemize}
Moreover, $\D$ is a {\bf thick subcategory} if in addition the following holds
\begin{itemize}
	\item[(iii)] \D is closed under direct summands, i.e. if $A$ is a direct summand of $X$, then $X \in \D$ implies $A\in \D$.
\end{itemize}
\end{definition}

\begin{remark} Let \D be a complete subcategory of \E. Take $X\in\D$: such an object exists since \D is nonempty. Then $0\in\D$ since 
	\tikzcs{X \arrow[r, "1_X", tail] \&  X \arrow[r, two heads]  \&  0}
is short exact. Hence by Proposition~\ref{prop:exact subcat is exact} \D is additive, closed under isomorphisms and admits an exact structure induced by the exact structure on \E.

By Remark~\ref{rem ex cat}~(1) all sequences of the form 
\begin{tikzcd}[ampersand replacement=\&]
	A \ar[r, "{\bmtxo{ 1 & 0 }}^t", tail] \& A \oplus B \ar[r, "{\bmtxo{ 0 & 1 }}", two heads] \& B
\end{tikzcd}
are short exact. Hence if two of $A$, $B$ and $A\oplus B$ are in \D, then so is the third by axiom (ii). We will use this fact repeatedly without referring to Remark~\ref{rem ex cat} every time.
\end{remark}


Our goal is to establish a one-to-one correspondence between the complete/thick subcategories of \E containing \N and the complete/thick subcategories of \EN, under the right assumptions. First we need to prove that complete and thick subcategories of \E containing \N are closed under isomorphisms when we pass to \EN. This is always true in the case of a thick subcategory, as the corollary of the next proposition shows. However, we need some extra assumptions in the case of a complete subcategory. 

\begin{proposition} \label{prop:Em closed under direct summands in EN}
Let \D be a thick subcategory of \E containing \N. If $Y$ is a direct summand of $X$ in the quotient category \EN, then $X \in \D$ implies $Y \in \D$.
\end{proposition}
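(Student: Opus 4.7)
The plan is to convert the ``weak'' splitting in $\EN$ into a genuine direct sum decomposition in $\E$ by absorbing the defect into an object of $\N$, and then to invoke axiom (iii) of Definition~\ref{def:complete and thick subcat}.

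First I would unpack the hypothesis. Having $Y$ as a direct summand of $X$ in $\EN$ yields morphisms $i\colon Y\to X$ and $p\colon X\to Y$ in $\E$ with $\udl{p}\,\udl{i}=\udl{1_Y}$. By construction of the ideal defining $\EN$, the difference $1_Y-pi$ factors through some object $N\in\N$; choose $\alpha\colon Y\to N$ and $\beta\colon N\to Y$ with $1_Y-pi=\beta\alpha$. Since $\N\subseteq\D$ and $X\in\D$, and since a complete subcategory is closed under finite direct sums (two-out-of-three applied to the split short exact sequence $X\rightarrowtail X\oplus N\twoheadrightarrow N$, as noted in the remark after Definition~\ref{def:complete and thick subcat}), we have $X\oplus N\in\D$.

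Second, I would absorb the defect into the augmentation by $N$. Setting
$$\tilde\imath:=\bmtxt{i\\ \alpha}\colon Y\longrightarrow X\oplus N, \qquad \tilde p:=\bmtxo{p & \beta}\colon X\oplus N\longrightarrow Y,$$
a direct computation gives $\tilde p\tilde\imath=pi+\beta\alpha=1_Y$ in $\E$ itself, so $\tilde\imath$ is a genuine split \mono in $\E$. Split \monos are \adm in any exact category (cf.~\cite{Buhler}), hence $\tilde\imath$ fits into a short exact sequence $Y\rightarrowtail X\oplus N\overset{c}{\twoheadrightarrow} C$ in $\calS$. From $(1-\tilde\imath\tilde p)\tilde\imath=0$ one factors $1-\tilde\imath\tilde p=hc$ through $c$; the identities $ch=1_C$, $\tilde p h=0$ and $\tilde\imath\tilde p+hc=1$ then follow by chasing against the epimorphism $c$. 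These identities exhibit an isomorphism $X\oplus N\cong Y\oplus C$ in $\E$, so $Y$ is a genuine direct summand of the object $X\oplus N\in\D$. Axiom (iii) of Definition~\ref{def:complete and thick subcat} then yields $Y\in\D$.

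The main obstacle is conceptual rather than computational: $\EN$ need not be idempotent complete, so a splitting there is \emph{a priori} weaker than a splitting in $\E$. The device of enlarging $X$ to $X\oplus N$---which is permissible precisely because $\N\subseteq\D$ and $\D$ is closed under direct sums---is what upgrades the quotient-level splitting into a genuine one. Once this step is in place, the remainder (split \monos are \adm, and split \monos in an additive category with cokernels produce direct sum decompositions) is routine.
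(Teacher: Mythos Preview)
Your argument is correct and follows exactly the paper's strategy: factor $1_Y-pi$ through some $N\in\N$, enlarge $X$ to $X\oplus N\in\D$, and check that the augmented maps satisfy $\tilde p\,\tilde\imath=1_Y$ in $\E$, so that $Y$ is a direct summand of an object of $\D$. Your further elaboration---fitting $\tilde\imath$ into a short exact sequence and producing a complement $C$---is superfluous (and the blanket claim that split \monos are \adm in \emph{any} exact category actually needs weak idempotent completeness, cf.\ \cite[\S7]{Buhler}); the paper stops at the retraction $\tilde p\,\tilde\imath=1_Y$ and invokes axiom~(iii) directly.
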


\begin{proof}
Assume that we have $Y \xrightarrow{\udl{g}} X \xrightarrow{\udl{f}} Y$ with $\udl{fg} = \udl{1}_Y$ in \EN. Then there exists a commutative diagram in \E
\begin{displaymath}
  \begin{tikzcd}[sep=small]
	Y \arrow[rr, "fg-1_Y"]  \arrow[dr,"\alpha"'] &  &  Y\\
	&  N \arrow[ur, "\beta"']  &
  \end{tikzcd}
\end{displaymath}
with $N \in \N$. Now define morphisms $i, p$ by
\begin{displaymath}
\begin{tikzcd}[ampersand replacement=\&, sep = huge]
	Y \ar[r, "i:=\bmtx{g \\ \alpha}"] \&  X \oplus N \ar[r, "p:=\bmtx{f & -\beta}"] \& Y \end{tikzcd}
\end{displaymath}
Then $p i = \bmtx{f & -\beta} \bmtx{g \\ \alpha} = fg - \beta \alpha = 1_Y$, so $Y$ is a direct summand of $X \oplus N$. Moreover, since $X, N \in \D$ the 2 out of 3 property gives that $X \oplus N \in \D$. Hence $Y$ is a direct summand of an object in \D, implying $Y\in \D$.
\end{proof}

\begin{corollary} \label{cor:Em closed under iso in EN if thick}
Let \D be a thick subcategory of \E containing \N. Then \D is closed under isomorphisms in the quotient category $\E / \N$: if $X$ and $Y$ are isomorphic in $\E / \N$, then $X \in \D$ implies $Y \in \D$. 
\end{corollary}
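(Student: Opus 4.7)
The plan is to derive this as an immediate consequence of Proposition~\ref{prop:Em closed under direct summands in EN}. Indeed, an isomorphism is a particularly strong form of being a direct summand, so the hard work has already been done.

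More precisely, suppose $X$ and $Y$ are isomorphic in $\E/\N$ and $X \in \D$. By definition of isomorphism in a quotient category, there exist morphisms $f: X \to Y$ and $g: Y \to X$ in $\E$ such that $\udl{f}\,\udl{g} = \udl{1_Y}$ and $\udl{g}\,\udl{f} = \udl{1_X}$ in $\E/\N$. The first of these two identities, together with the pair $Y \xrightarrow{\udl{g}} X \xrightarrow{\udl{f}} Y$, is exactly the hypothesis of Proposition~\ref{prop:Em closed under direct summands in EN} (which witnesses $Y$ as a direct summand of $X$ in $\E/\N$). Applying that proposition with $X \in \D$ yields $Y \in \D$, which is what we wanted.

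There is no real obstacle here; the whole content of the corollary is packaged in the preceding proposition, and only the observation that a two-sided inverse in $\E/\N$ is in particular a one-sided section is needed.
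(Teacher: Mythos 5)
Your proof is correct and is precisely the argument the paper leaves implicit by stating this as a corollary with no proof: an isomorphism in $\E/\N$ exhibits $Y$ as a direct summand of $X$ in $\E/\N$, so Proposition~\ref{prop:Em closed under direct summands in EN} applies directly.
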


We now state the additional assumptions we need to prove the above for complete (not necessarily thick) subcategories containing \N. 

\begin{definition}
A (full) subcategory \N of \E is {\bf factorization admissible} if it is closed under finite direct sums, direct summands and if whenever a morphism $f: X \to Y$ factors through an object in \N, then there exists a factorization
\begin{displaymath}
  \begin{tikzcd}[sep=small]
	X \arrow[rr, "f"]  \arrow[dr,"\alpha"'] &  &  Y\\
	&  N \arrow[ur, "\beta"']  &
  \end{tikzcd}
\end{displaymath}
with $N \in \N$ such that either $\alpha$ is an \amono or $\beta$ is an \aepi.
\end{definition}

As the following example shows, such subcategories appear naturally. 

\begin{examples} \label{ex:inj E factorization admissible}
If \E has enough \calS-injective objects, then \inj \E is a factorization admissible subcategory. Indeed, we may take $\alpha$ to be an \amono \tikzcs{\mu: X \arrow[r, tail] \& I} as described in Remark~\ref{rem:factors through I(X)}. Moreover, \inj \E is closed under finite direct sums and direct summands. 
Similarly if  \E has enough \calS-projective objects, then \proj \E is a factorization admissible subcategory of \E.
\end{examples}

The other technical condition we need when we deal with complete subcategories is an adapted version of the Five Lemma. 

\begin{definition}
We say that the quotient category \EN satisfies the { \bf Weak Five Lemma} for $\calS_\N$ if whenever we have a morphism of sequences in $\calS_\N$
\begin{displaymath}
  \begin{tikzcd}
	& X  \arrow[r] \arrow[d, "\udl{f}"] &  Y \arrow[r] \arrow[d, "\udl{g}"]  &  Z \arrow[d, "\udl{h}"] & \in \calS_\N\\
	& X'  \arrow[r] &  Y' \arrow[r] &  Z'  & \in \calS_\N
  \end{tikzcd}
\end{displaymath}
then the following hold:
\begin{itemize}
\item[(i)] If $\udl{f}$ and $\udl{g}$ are isomorphisms and $Z'=0$, then $\udl{h}$ is an isomorphism (i.e. $Z \cong 0$). 
\item[(ii)] Dually, if $\udl{g}$ and $\udl{h}$ are isomorphisms and $X=0$, then $\udl{f}$ is an isomorphism. 
\end{itemize}
\end{definition}

\begin{lemma} \label{lem:iso to 0 in quot}
If $X \cong 0$ in $\uE$, then $X$ is a direct summand of an object in \N.
\end{lemma}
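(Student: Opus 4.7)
The plan is to unwind the definition of isomorphism in the quotient category. Saying that $X \cong 0$ in $\uE = \E/\N$ means there exist morphisms $\udl{u}: X \to 0$ and $\udl{v}: 0 \to X$ whose composites give $\udl{v}\udl{u} = \udl{1}_X$ and $\udl{u}\udl{v} = \udl{1}_0$. Since $\Hom_\E(X,0)$ and $\Hom_\E(0,X)$ each contain only the zero morphism, the composite $vu$ in $\E$ is the zero morphism $X \to X$. Hence the relation $\udl{v}\udl{u} = \udl{1}_X$ in $\E/\N$ reads $\udl{0} = \udl{1}_X$, i.e.\ $1_X \in \calI(X,X)$.

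By the very definition of the ideal $\calI$ associated with $\N$, the morphism $1_X: X \to X$ must then factor through some object of $\N$. So I would pick $N \in \N$ together with morphisms $\alpha: X \to N$ and $\beta: N \to X$ such that $\beta \alpha = 1_X$. This equation exhibits $\alpha$ as a split monomorphism and $\beta$ as a split epimorphism in \E, so $X$ is (isomorphic to) a direct summand of $N$, which is precisely the conclusion of the lemma.

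There is no real obstacle here; the only subtlety is making sure one correctly translates $X \cong 0$ in the quotient into the statement $1_X \in \calI(X,X)$, using the fact that $\Hom_\E(X,0) = 0 = \Hom_\E(0,X)$ (which holds because \E has a zero object). Note that the argument does not require any of the axioms Ex0--Ex2 of the exact structure, nor does it need $\N$ to be factorization admissible; the statement is purely a consequence of the construction of the quotient category $\E/\N$ together with the existence of a zero object.
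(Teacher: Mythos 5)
Your proof is correct and takes essentially the same approach as the paper: translate $X \cong 0$ in $\uE$ into $\udl{1}_X = \udl{0}$, then factor $1_X$ through some $N \in \N$ and conclude from $\beta\alpha = 1_X$ that $X$ is a direct summand of $N$. The only difference is that you spell out the initial unwinding (via $\Hom_\E(X,0) = 0 = \Hom_\E(0,X)$), which the paper leaves implicit.
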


\begin{proof} 
We have $\udl{1}_X = \udl{0}$ in \EN, so there is a factorization
\begin{displaymath}
  \begin{tikzcd}[sep=small]
	X \arrow[rr, "1_X-0"]  \arrow[dr,"\alpha"'] &  &  X\\
	&  N \arrow[ur, "\beta"']  &
  \end{tikzcd}
\end{displaymath}
in \E with $N \in \N$. Hence $\beta \alpha = 1_X$, so $X$ is a direct summand of $N$.
\end{proof}

We now prove a version of Corollary~\ref{cor:Em closed under iso in EN if thick} for complete subcategories.

\begin{proposition} \label{prop:Closed under iso in the quotient cat if N factorization admissible}
Assume that \N is a factorization admissible subcategory of \E and that \EN satisfies the Weak Five Lemma for $\calS_\N$. If \D is a complete subcategory of \E containing \N, then \D is closed under isomorphisms in the quotient category \uE. 
\end{proposition}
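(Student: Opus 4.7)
The plan is to adapt the proof of Proposition~\ref{prop:Em closed under direct summands in EN}, replacing the use of direct-summand closure of \D by the factorization admissibility of \N together with the Weak Five Lemma. Given an isomorphism $\udl{f}\colon X\to Y$ in \uE with $X\in\D$ and inverse $\udl{g}$, my aim is to exhibit a short exact sequence in \calS whose three terms all lie in \D and whose leftmost term is $Y$; the 2-out-of-3 axiom for \D will then give $Y\in\D$.

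Since $\udl{fg}=\udl{1}_Y$, the morphism $fg-1_Y$ factors through some $N\in\N$, and factorization admissibility yields a factorization $fg-1_Y=\beta\alpha$ with $\alpha\colon Y\to N$, $\beta\colon N\to Y$, in which either $\alpha$ is an \amono or $\beta$ is an \aepi. I will treat the case in which $\alpha$ is an \amono; the other case is completely dual. Following Proposition~\ref{prop:Em closed under direct summands in EN}, set $i=\bmtxt{g\\ \alpha}$ and $p=\bmtxt{f & -\beta}$, so that $pi=1_Y$. The first step is to verify that $i$ is itself an \amono in \E, by factoring
$$i\colon Y\xrightarrow{\bmtxt{g\\ 1_Y}}X\oplus Y\xrightarrow{1_X\oplus\alpha}X\oplus N,$$
where $\bmtxt{g\\ 1_Y}=\bmtxt{1_X & g\\ 0 & 1_Y}\bmtxt{0\\ 1_Y}$ is a composite of an \amono (Remark~\ref{rem ex cat}~(1)) with an isomorphism, and $1_X\oplus\alpha$ is a direct sum of \amonos. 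This yields a short exact sequence $Y\xrightarrow{i}X\oplus N\xrightarrow{\rho}K$ in \calS with $X\oplus N\in\D$, so the task reduces to showing $K\in\D$.

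I would establish $K\in\N\subseteq\D$ by first proving $K\cong 0$ in \uE and then invoking Lemma~\ref{lem:iso to 0 in quot} together with the closure of \N under direct summands. To see $K\cong 0$ in \uE, compare the sequence above with $Y\xrightarrow{1_Y}Y\to 0$ (which lies in $\calS\subseteq\calS_\N$) via the morphism with components $\udl{1}_Y,\udl{p},\udl{0}$; commutativity amounts to $pi=1_Y$. The map $\udl{p}$ is an isomorphism in \uE because $N\cong 0$ in \uE forces $\udl{p}=\udl{[f,0]}=\udl{f}\circ\udl{\pi}$, where $\pi\colon X\oplus N\to X$ is the projection and is itself an isomorphism in \uE (its composite with the inclusion $X\to X\oplus N$ differs from $1_{X\oplus N}$ by a map factoring through $N$). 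The Weak Five Lemma~(i) then forces the third vertical map $K\to 0$ to be an isomorphism, i.e.\ $K\cong 0$ in \uE, as required.

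The main obstacle I anticipate is confirming that $i$ really is an \amono in \E (and dually, in the other case, that $p=\bmtxt{f & -\beta}$ is an \aepi): this admissibility is precisely what upgrades the abstract splitting $pi=1_Y$ into a genuine short exact sequence in \calS, inside which the 2-out-of-3 axiom for \D can be applied. Once that is in place, the Weak Five Lemma, Lemma~\ref{lem:iso to 0 in quot}, and the closure of \N under direct summands combine uniformly to finish both cases.
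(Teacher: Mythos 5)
Your proof is correct and follows essentially the same strategy as the paper: factor $fg-1_Y$ through $N$, build a short exact sequence $Y \rightarrowtail X\oplus N \twoheadrightarrow K$, kill $K$ via the Weak Five Lemma and Lemma~\ref{lem:iso to 0 in quot}, and finish with the 2-out-of-3 axiom. The only differences are cosmetic: where the paper obtains the short exact sequence by citing \cite[Proposition 2.12]{Buhler} on the pushout (resp.\ pullback) square, you verify directly by matrix factorization that $\bmtxt{g\\\alpha}$ is an admissible monomorphism (resp.\ that $\bmtxt{f & -\beta}$ is an admissible epimorphism); and where the paper compares the sequence against $X \xrightarrow{\udl{1}_X} X \to 0$ via $(\udl{g},\udl{\bmtxo{1 & 0}},0)$, you compare against $Y \xrightarrow{\udl{1}_Y} Y \to 0$ via $(\udl{1}_Y,\udl{p},0)$, which requires the small extra observation (which you supply) that $\udl{p}=\udl{f}\circ\udl{\pi_X}$ is an isomorphism.
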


\begin{proof}
Assume that $X \in \D$ is isomorphic to $Y$ in \uE, and that $X \overset{\udl{f}}{\underset{\udl{g}}{\rightleftarrows}} Y$ are inverse isomorphisms. Then $fg-1_Y$ factors through some object $N \in \N$ as in the diagram below. 
\begin{displaymath}
  \begin{tikzcd}[sep=small]
	Y \arrow[rr, "fg-1_Y"]  \arrow[dr,"\alpha"'] &  &  Y\\
	&  N \arrow[ur, "\beta"']  &
  \end{tikzcd}
\end{displaymath}
We may assume that either $\alpha$ is an \amono or that $\beta$ is an \aepi. If $\alpha$ is an \amono, consider the \PO 
\begin{displaymath}
  \begin{tikzcd}
	 Y \arrow[r, "g"] \arrow[d, "\alpha"', tail] \arrow[dr, phantom, "\scriptscriptstyle PO" ] &  X \arrow[d, "a"] \\
	 N \arrow[r, "g'"] &  C
  \end{tikzcd}
\end{displaymath}
By \cite[Proposition 2.12]{Buhler} we get that 
\begin{displaymath}
\begin{tikzcd}[ampersand replacement=\&, sep = large]
	Y \ar[r, "{\bmtx{ g \\ \alpha }}", tail] \& X \oplus N \ar[r, "{\bmtx{ a & -g' }}", two heads] 
	\& C 
\end{tikzcd}
\end{displaymath}
lies in \calS. Hence we get the following morphism of sequences in $\calS_\N$
\begin{displaymath}
  \begin{tikzcd}[ampersand replacement=\&, sep = large]
	Y \arrow[r, "\udl{\bmtxt{ g \\ \alpha }}"] \arrow[d, "\udl{g}"] \arrow[dr, phantom, "\circlearrowleft" ] \& X \oplus N \arrow[r, "\udl{\bmtxo{ a & -g' }}"] \arrow[d, "\udl{\bmtxo{ 1 & 0}}"] \arrow[dr, phantom, "\circlearrowleft" ] \& C \arrow[d] \& \in \calS_\N \\
	X  \arrow[r, "\udl{1}_X"] \&  X \arrow[r] \&  0 \& \in \calS_\N
  \end{tikzcd}
\end{displaymath}
Since $\udl{g}$ and $\udl{\bmtxo{ 1 & 0}}$ are isomorphisms in \EN, so is the map $C \to 0$ by the Weak Five Lemma for $\calS_\N$. By Lemma~\ref{lem:iso to 0 in quot}, $C \cong 0$ implies that $C \in \N$ since \N is closed under direct summands. Furthermore, \D contains \N, so this gives $C \in \D$. We have $X \oplus N \in \D$ by the 2 out of 3 property, since $X, N \in \D$. Hence the 2 out of 3 property applied to \tikzcs{Y \arrow[r, tail] \& X \oplus N \arrow[r, two heads] \& C} implies that $Y \in \D$.

If instead $\beta$ is an \aepi, consider the \PB 
\begin{displaymath}
  \begin{tikzcd}
	 C \arrow[r, "b"] \arrow[d, "f'"'] \arrow[dr, phantom, "\scriptscriptstyle PB" ] &  X \arrow[d, "f"] \\
	 N \arrow[r, "\beta", two heads] &  Y
  \end{tikzcd}
\end{displaymath}
We get
\begin{tikzcd}[ampersand replacement=\&, sep = large, cramped]
	C \ar[r, "{\bmtx{ b \\ f' }}", tail] \& X \oplus N \ar[r, "{\bmtx{ f & -\beta }}", two heads] \& Y
\end{tikzcd}
$\in \calS$ and a morphism of sequences in $\calS_\N$
\begin{displaymath}
  \begin{tikzcd}[ampersand replacement=\&, sep = large]
	\& 0  \arrow[r] \arrow[d] \&  X \arrow[r, "\udl{1}_X"] \arrow[d, "\udl{\bmtxo{ 1 \\ 0}}"'] \&  X \arrow[d, "\udl{f}"] \& \in \calS_\N\\
	\& C \arrow[r, "\udl{\bmtxt{ b \\ f' }}"]  \& X \oplus N \arrow[r, "\udl{\bmtxo{ f & -\beta  }}"] \& Y \& \in \calS_\N 
  \end{tikzcd}
\end{displaymath}
As before, the fact that $\udl{f}$ and $\udl{\bmtxt{ 1 \\ 0}}$ are isomorphisms in \EN implies that $C \in \N \subseteq \D$, which again implies that $Y \in \D$.
\end{proof}



%

%
%
%

Recall that we have fixed a full subcategory \N of our exact category \E, and that this subcategory is closed under finite direct sums. We assume that \N is factorization admissible only when stated. We now construct the maps 
\begin{displaymath}
\begin{tikzcd}
	\{\text{complete subcategories of \E containing \N}\} \ar[r, "F"] &  \{\text{subcategories of \uE} \}
\end{tikzcd}
\end{displaymath}
\begin{displaymath}
\begin{tikzcd}
	\{\text{subcategories of \E containing \N}\} & \ar[l, "G"'] \{\text{complete subcategories of \uE} \}
\end{tikzcd}
\end{displaymath}
that will induce inverse bijections in several cases.

\defi{
{\bf (a) } For a complete subcategory \D of \E containing \N, define $F\D$ to be the full subcategory of \uE whose objects are the objects of \D.

{\bf (b) } For a complete subcategory \uEm of \uE, define $G\uEm$ to be the full subcategory of \E whose objects are the objects in \uEm (including \N). 
}

We first prove that $F$ and $G$ are maps between the two collections of complete subcategories, under the right assumptions. 

\begin{proposition} \label{prop: complete gives complete (if closed under iso)}
{\bf (1) } If \uEm is a complete subcategory of \uE, then $G\uEm$ is a complete subcategory of \E containing \N. Moreover, if \uEm is thick, then so is $G\uEm$. 

{\bf (2) } Assume that \D is closed under isomorphisms in \uE. If \D is a complete subcategory of \E (containing \N), then $F\D$ is a complete subcategory of \uE.
\end{proposition}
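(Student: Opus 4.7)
\emph{Part (1).} By construction $G\uEm$ is a full subcategory of \E, nonempty, and contains \N, so the only axiom needing verification is 2-out-of-3, and in the thick case also summand-closure. I first record a preliminary fact: any object $N \in \N$ satisfies $\udl 1_N = \udl 0$ in \uE (the identity of $N$ factors through $N$), hence $N \cong 0$ in \uE. A complete subcategory $\uEm \subseteq \uE$ is automatically closed under isomorphisms of \uE: given an iso $\udl\phi : X \to Y$ with inverse $\udl\psi$ and $X \in \uEm$, the row $Y \xrightarrow{\udl\psi} X \to 0$ is isomorphic in \uE to the image of $X \xrightarrow{1_X} X \to 0 \in \calS$, so lies in $\calS_\N$; since $X, 0 \in \uEm$, the 2-out-of-3 axiom forces $Y \in \uEm$. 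Combining these two observations gives $\N \subseteq \uEm$, so the objects of $G\uEm$ coincide with those of $\uEm$.

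\emph{2-out-of-3 for $G\uEm$.} Let \tikzcs{X \arrow[r, "f", tail] \& Y \arrow[r, "g", two heads] \& Z} lie in \calS with two of $X, Y, Z$ in $G\uEm$, equivalently in $\uEm$. Its image in \uE lies in $\calS_\N$ by way of the identity isomorphism of sequences, so completeness of $\uEm$ places the third object in $\uEm$, hence in $G\uEm$. For the thick statement, observe that the quotient functor $\E \to \uE$ preserves biproducts: a splitting $Y \to X \to Y$ of an object $Y$ from $X \in G\uEm$ inside \E descends to a splitting in \uE, and thickness of $\uEm$ returns $Y$ to $\uEm \subseteq G\uEm$.

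\emph{Part (2).} $F\D$ is full and nonempty by construction. For 2-out-of-3, take $X \xrightarrow{\udl f} Y \xrightarrow{\udl g} Z$ in $\calS_\N$ with two of $X, Y, Z$ in $F\D$, equivalently in \D. By the definition of $\calS_\N$ there is an isomorphism of sequences in \uE to some \tikzcs{X' \arrow[r, "f'", tail] \& Y' \arrow[r, "g'", two heads] \& Z'} lying in \calS. The hypothesis that \D is closed under isomorphisms of \uE transports the two known objects to their counterparts in $\{X', Y', Z'\}$, placing those in \D; 2-out-of-3 for \D inside \E then puts the third of $\{X', Y', Z'\}$ in \D; and a second application of the closure hypothesis transfers the conclusion back along the componentwise isomorphism to the corresponding term of the original sequence, placing it in \D and therefore in $F\D$.

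\emph{Main obstacle.} The arguments are largely formal, but two points merit care. First, in Part (1) one must explicitly check that $\N$ is automatically contained in every complete $\uEm$, so that the 2-out-of-3 step does not stumble on sequence-terms that lie in $\N$ but were not listed amongst the objects of $\uEm$; this is exactly what the preliminary observation above supplies, and it is the one place where the technical gadget of complete subcategories is genuinely used rather than just cited. Second, in Part (2) the closure-under-isomorphism hypothesis must be invoked twice, once to pass from the given $\calS_\N$-sequence to its $\calS$-lift and once to return, which is why that hypothesis cannot be dropped.
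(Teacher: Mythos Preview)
Your proof is correct and follows essentially the same route as the paper's. The one difference is that in Part~(1) you supply an explicit argument that a complete subcategory $\uEm$ of $\uE$ is closed under isomorphisms (via the $\calS_\N$-sequence $Y \xrightarrow{\udl\psi} X \to 0$), whereas the paper simply asserts this closure; your added detail is sound and fills a small gap in the paper's presentation.
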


\begin{proof}
{\it (1):} Assume that \uEm is a complete subcategory of \uE and consider the subcategory $G\uEm$ of \E. Note that $G\uEm$ is full by definition. Assume that \tikzcs{ X \arrow[r, "f", tail] \&  Y \arrow[r, "g", two heads]  \&  Z} is in \calS. Then \tikzcs{X \arrow[r, "\udl{f}"] \&  Y \arrow[r, "\udl{g}"]  \&  Z} is in $\calS_\N$. Hence the 2 out of 3 property of $G\uEm$ follows from 2 out of 3 property of \uEm since $\obj G\uEm = \obj\uEm$. Furthermore, if $A$ is a direct summand of $X$ in \E, then $A$ is also a direct summand of $X$ in \EN. Hence $G\uEm$ is closed under direct summands in \E since $\uEm$ is closed under direct summands in \uE. Note that all objects in \N are isomorphic to the zero object in \uE. Since \uEm is closed under isomorphisms and $0\in \uEm$, this implies that \N is contained in \uEm, and therefore also in $G\uEm$.

{\it (2):} Assume that \D is a complete subcategory of \E (containing \N), closed under isomorphisms in \uE, and consider the subcategory $F\D$ of \uE. As above, $F\D$  is full by definition. Furthermore, assume that $X \xrightarrow{\udl{f}} Y \xrightarrow{\udl{g}} Z \in \calS_\N$. By the definition of $\calS_\N$ there exists an isomorphism
\begin{displaymath}
  \begin{tikzcd}
	X  \arrow[r, "\udl{f}"] \arrow[d, "\cong"] &  Y \arrow[r, "\udl{g}"] \arrow[d, "\cong"]  &  Z \arrow[d, "\cong"]\\
	X'  \arrow[r, "\udl{f}'"] &  Y' \arrow[r, "\udl{g}'"] &  Z'
  \end{tikzcd}
\end{displaymath}
of sequences in \uE with \tikzcs{ X' \arrow[r, "f'", tail] \& Y' \arrow[r, "g'", two heads] \& Z'} short exact in \E. Since $\D$ is closed under isomorphisms in \uE, the 2 out of 3 property of $F\D$ follows directly from the 2 out of 3 property of \D. Indeed, assume for example that $X, Y \in \obj F\D = \obj \D$. Then $X', Y' \in \obj \D$ since \D is closed under isomorphisms in \uE. Moreover, since the sequence \tikzcs{ X' \arrow[r, "f'", tail] \& Y' \arrow[r, "g'", two heads] \& Z'} short exact in \E, the 2 out of 3 property of \D implies that $Z' \in \obj \D$. Again, \D is closed under isomorphisms in \uE, so this implies that \mbox{ $Z \in \obj \D = \obj F\D$.} 
\end{proof}

\begin{corollary}
{\bf (1) } Assume that \N is a factorization admissible subcategory of \E and that \EN satisfies the Weak Five Lemma for $\calS_\N$. If \D is a complete subcategory of \E containing \N, then $F\D$ is a complete subcategory of \uE.

{\bf (2) } If \D contains \N and is thick, then so is $F\D$.
\end{corollary}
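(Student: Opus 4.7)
The plan is to assemble this corollary directly from the results proved immediately above; both parts reduce to verifying closure under isomorphisms in \uE and then invoking Proposition~\ref{prop: complete gives complete (if closed under iso)}(2).

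For part (1), I would first apply Proposition~\ref{prop:Closed under iso in the quotient cat if N factorization admissible}: under the standing hypotheses that \N is factorization admissible and \EN satisfies the Weak Five Lemma for $\calS_\N$, any complete subcategory \D of \E containing \N is automatically closed under isomorphisms in \uE. Once this closure property is in hand, Proposition~\ref{prop: complete gives complete (if closed under iso)}(2) applies verbatim and delivers that $F\D$ is a complete subcategory of \uE.

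For part (2), the additional hypotheses on \N are not needed, since thickness of \D supplies isomorphism-closure for free: Corollary~\ref{cor:Em closed under iso in EN if thick} shows that a thick subcategory of \E containing \N is closed under isomorphisms in \uE, bypassing factorization admissibility and the Weak Five Lemma. Applying Proposition~\ref{prop: complete gives complete (if closed under iso)}(2) again, $F\D$ is at least a complete subcategory of \uE. To upgrade ``complete'' to ``thick'' it remains only to verify closure under direct summands in \uE: since $\obj F\D = \obj \D$, this is exactly the content of Proposition~\ref{prop:Em closed under direct summands in EN}, which says that any direct summand in \uE of an object of \D lies again in \D.

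I do not foresee any real obstacle here, as the substantive work has been done in the preceding propositions and the corollary is essentially a bookkeeping step that dispatches them to the two cases. The only thing to keep straight is which isomorphism-closure result to invoke: Proposition~\ref{prop:Closed under iso in the quotient cat if N factorization admissible} in part~(1), whose hypotheses match the standing assumptions, versus Corollary~\ref{cor:Em closed under iso in EN if thick} in part~(2), which draws the same conclusion from thickness alone.
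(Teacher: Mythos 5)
Your proposal is correct and follows the paper's own proof essentially verbatim: both parts reduce to the isomorphism-closure results (Proposition~\ref{prop:Closed under iso in the quotient cat if N factorization admissible} for (1), Corollary~\ref{cor:Em closed under iso in EN if thick} for (2)), then invoke Proposition~\ref{prop: complete gives complete (if closed under iso)}, and the thickness upgrade in (2) comes from Proposition~\ref{prop:Em closed under direct summands in EN}, exactly as in the paper.
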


\begin{proof} 
The subcategory \D is closed under isomorphisms in \uE in both cases by Proposition~\ref{prop:Closed under iso in the quotient cat if N factorization admissible} and Corollary~\ref{cor:Em closed under iso in EN if thick}, respectively. Thus by Proposition~\ref{prop: complete gives complete (if closed under iso)} $F\D$ is a complete subcategory of \uE. Moreover, if \D is closed under direct summands in \E, then $F\D$ is closed under direct summands in \EN by Proposition~\ref{prop:Em closed under direct summands in EN}.
\end{proof}

We now have the following main result, which follows from the above. 

\begin{theorem} \label{thm:Main thm}
Let \E be an exact category and \N a full subcategory closed under finite direct sums.

{\bf (1) } There is a one-to-one correspondence between thick subcategories of \E containing \N and thick subcategories of the quotient category \uE given by
\begin{displaymath}
  \begin{tikzcd}
	\{\text{thick subcategories of \E containing \N}\} \ar[r, bend left, "F", start anchor=north east, end anchor=north west] & \ar[l, bend left, "G", start anchor=south west, end anchor=south east] \{\text{thick subcategories of \uE} \}.
 \end{tikzcd} 
\end{displaymath}

{\bf (2) } Assume in addition that \N is factorization admissible and that \EN satisfies the Weak Five Lemma for $\calS_\N$. Then there is a one-to-one correspondence between complete subcategories of \E containing \N and complete subcategories of the quotient category \uE given by
\begin{displaymath}
  \begin{tikzcd}
	\{\text{complete subcategories of \E containing \N}\} \ar[r, bend left, "F", start anchor=north east, end anchor=north west] & \ar[l, bend left, "G", start anchor=south west, end anchor=south east] \{\text{complete subcategories of \uE} \}. 
  \end{tikzcd}
\end{displaymath}
\end{theorem}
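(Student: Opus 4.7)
The plan is to deduce both bijections from the preceding results, which already establish well-definedness of $F$ and $G$ between the respective collections. Specifically, Proposition~\ref{prop: complete gives complete (if closed under iso)}~(1) shows that $G$ sends complete (resp.\ thick) subcategories of $\uE$ to complete (resp.\ thick) subcategories of $\E$ containing $\N$, while the preceding corollary guarantees that $F$ maps thick subcategories of $\E$ containing $\N$ to thick subcategories of $\uE$, and under the additional hypotheses of part~(2), maps complete subcategories of $\E$ containing $\N$ to complete subcategories of $\uE$. Since $F\D$ and $G\uEm$ are \emph{full} subcategories specified entirely by their object classes, verifying that $F$ and $G$ are mutually inverse reduces to checking that the object classes are preserved under the two round trips $GF$ and $FG$.

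For $GF\D = \D$, we have $\obj F\D = \obj \D$ by definition, and then $\obj GF\D = \obj F\D \cup \obj \N = \obj \D$ because $\D \supseteq \N$ by hypothesis. Since both $\D$ and $GF\D$ are full subcategories of $\E$ with the same objects, they coincide.

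For $FG\uEm = \uEm$, we have $\obj FG\uEm = \obj G\uEm = \obj \uEm \cup \obj \N$, so it suffices to show $\N \subseteq \obj \uEm$. The key observation is that any complete subcategory $\uEm$ of $\uE$ automatically contains the zero object and is closed under isomorphisms in $\uE$. For the first assertion, pick any $X \in \uEm$; the short exact sequence $X \xrightarrow{1_X} X \to 0$ in $\E$ descends to a member of $\calS_\N$, and 2-out-of-3 forces $0 \in \uEm$. For the second, given an isomorphism $\udl{f} \colon X \to Y$ in $\uE$ with $X \in \uEm$, the sequence $X \xrightarrow{\udl{f}} Y \to 0$ is isomorphic in $\uE$ to $X \xrightarrow{\udl{1}_X} X \to 0$ (using $\udl{f}^{-1}$ as the middle vertical map), hence lies in $\calS_\N$, so $Y \in \uEm$ by 2-out-of-3. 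Finally every $N \in \N$ is isomorphic to $0$ in $\uE$ because $1_N$ factors through $N$ itself, giving $\udl{1}_N = \udl{0}$ and hence $N \cong 0$ in $\uE$. This yields $\N \subseteq \uEm$ and closes the argument.

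I do not anticipate any serious obstacle: the substantive verifications (extension closure, preservation of direct summands, and compatibility with the Weak Five Lemma and factorization admissibility) are all packaged in the earlier propositions and the preceding corollary. The only mildly subtle point is the automatic closure of complete subcategories of $\uE$ under isomorphisms in $\uE$, which is what forces $\N \subseteq \uEm$ and thereby closes the $FG$ round trip; once noted, it follows from the same 2-out-of-3 property applied to the evident sequence in $\calS_\N$.
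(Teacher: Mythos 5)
Your proof is correct and fills in exactly the round-trip verification that the paper leaves implicit with its remark that the theorem ``follows from the above'': well-definedness of $F$ and $G$ comes from Proposition~\ref{prop: complete gives complete (if closed under iso)} and its corollary, and the identities $GF\D=\D$ and $FG\uEm=\uEm$ reduce to object-class equalities since both are full subcategories. Your observation that every complete subcategory of $\uE$ is automatically closed under isomorphisms in $\uE$ (via 2-out-of-3 applied to $X\xrightarrow{\udl f}Y\to 0$), hence contains $\N$, is precisely the fact the paper tacitly uses in the proof of Proposition~\ref{prop: complete gives complete (if closed under iso)}~(1) without proving it, so you have supplied the one genuinely missing step.
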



Next we consider the special case where $(\F, \calS)$ is Frobenius and $\N = \inj \F$, i.e. the quotient category $\F/\N$ is the stable category \uF. Note that \inj \F is a factorization admissible subcategory of \F as described in Example~\ref{ex:inj E factorization admissible}. 

\begin{lemma} \label{lem:in calS-N iff in Delta}
A sequence $X \xrightarrow{\udl{f}} Y \xrightarrow{\udl{g}} Z$ is in $\calS_\N$ if and only if there exists a morphism $\udl{h}: Z \to TX$ such that $X \xrightarrow{\udl{f}} Y \xrightarrow{\udl{g}} Z \xrightarrow{\udl{h}} TX$ is a distinguished triangle in \E.
\end{lemma}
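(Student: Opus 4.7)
\emph{Proof plan.} The plan is to unravel both sides of the ``iff'' into the form ``isomorphic in $\uF$ to something canonical'': membership in $\calS_\N$ means isomorphic in $\uF$ to an honest short exact sequence of $\calS$, while distinguishedness means isomorphic (as 4-term triangles in $\uF$) to a standard triangle built via a pushout. The two are linked because the first three terms of every standard triangle already lie in $\calS_\N$ (through the injective summand appearing in the pushout), and every short exact sequence in $\calS$ extends to a distinguished triangle.

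For the ``only if'' direction, I reduce via the triangle isomorphism to a standard triangle $X' \xrightarrow{\udl{f'}} Y' \xrightarrow{\udl{g'}} C_{f'} \xrightarrow{\udl{h'}} TX'$ built from the pushout of $\mu' \colon X' \to I'$ along $f' \colon X' \to Y'$, with second pushout leg $\ovl{f'} \colon I' \to C_{f'}$. Applying \cite[Proposition 2.12]{Buhler} to the bicartesian square, the sequence
\[X' \xrightarrow{\bmtx{f' \\ \mu'}} Y' \oplus I' \xrightarrow{\bmtx{g' & -\ovl{f'}}} C_{f'}\]
lies in $\calS$. Since $I' \in \N$, the split projection $Y' \oplus I' \to Y'$ is an isomorphism in $\uF$, so this short exact sequence is $\uF$-isomorphic (as a sequence) to $X' \xrightarrow{\udl{f'}} Y' \xrightarrow{\udl{g'}} C_{f'}$. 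Composing with the first three terms of the triangle isomorphism places the original sequence in $\calS_\N$.

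For the ``if'' direction, $\calS_\N$-membership supplies a $\uF$-isomorphism of the given sequence with an honest short exact sequence $X' \xrightarrow{f'} Y' \xrightarrow{g'} Z'$ in $\calS$. I build a connecting morphism $h'$ directly from this short exact sequence: fix an injective embedding $\mu' \colon X' \to I'$ with cokernel $\pi'$, use injectivity of $I'$ to obtain $\ovl{f'} \colon Y' \to I'$ with $\ovl{f'} f' = \mu'$, and use the cokernel property of $g'$ to obtain the unique $h' \colon Z' \to TX'$ with $h' g' = \pi' \ovl{f'}$. To see $X' \to Y' \to Z' \to TX'$ is distinguished, I compare it with the standard triangle $X' \to Y' \to C_{f'} \to TX'$ built from the same $\mu'$: the pushout universal property (applied to $g' \colon Y' \to Z'$ and $0 \colon I' \to Z'$, which agree on $X'$ since $g' f' = 0$) produces $\phi \colon C_{f'} \to Z'$, and one verifies that $\phi$ lifts to an isomorphism of triangles in $\uF$. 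Finally, I transport the distinguished triangle along the given $\uF$-isomorphism on the first three terms (using $T$ of the first component to obtain the fourth) to produce the required $\udl h \colon Z \to TX$.

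I expect the main obstacle to be the comparison step in the ``if'' direction: verifying that $\udl\phi$ is a $\uF$-isomorphism and that the right-hand square $T(\id) h_* = h' \phi$ commutes in $\uF$, i.e. that $h' \phi - h_*$ factors through an injective. Both statements reduce to bookkeeping with the pushout presentation of $C_{f'}$ and the vanishing of $I' \in \N$ in $\uF$; once the comparison is checked, the remainder is routine transport along $\uF$-isomorphisms and functoriality of $T$.
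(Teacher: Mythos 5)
Your proposal is correct and follows essentially the same route as the paper: in both you reduce the $\calS_\N$-side to an honest short exact sequence by an isomorphism in $\uF$ and the $\Delta$-side to a standard triangle, and you bridge them via the pushout presentation $X' \rightarrowtail Y' \oplus I' \twoheadrightarrow C_{f'}$ together with the fact that $I'$ vanishes in $\uF$. The only real difference is one of economy: the paper cites \cite[Chapter I, Section 2.7]{Happel} for the existence of the connecting morphism and declares the converse direction ``fairly easy,'' whereas you unpack both steps explicitly (constructing $h'$ from the injective embedding and the cokernel property, and verifying that the comparison map $\phi\colon C_{f'}\to Z'$ is an isomorphism in $\uF$ with $h'\phi$ agreeing with the standard connecting map up to sign). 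Your identification of the comparison step as the place requiring care is accurate, and the plan you give there does go through. Two small remarks: you have the labels ``if'' and ``only if'' interchanged (your ``only if'' argument proves $\Delta \Rightarrow \calS_\N$, which is the ``if'' direction), and in the comparison step a sign adjustment is needed ($h'\phi$ agrees with $-h_*$ rather than $h_*$ in $\uF$ under the conventions of \cite[Proposition 2.12]{Buhler}); both are cosmetic and do not affect the argument.
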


\begin{proof}
Assume that $X \xrightarrow{\udl{f}} Y \xrightarrow{\udl{g}} Z \in \calS_\N$. Then there exists an isomorphism of sequences given by the solid part of the diagram
\begin{displaymath}
  \begin{tikzcd}
	X  \arrow[r, "\udl{f}"] \arrow[d, "\cong", "\udl{\varphi_1}"'] &  Y \arrow[r, "\udl{g}"] \arrow[d, "\cong", "\udl{\varphi_2}"']  &  Z \arrow[d, "\cong", "\udl{\varphi_3}"'] \arrow[rr, "\udl{T\varphi_1^{-1} h' \varphi_3}", dotted] && TX \arrow[d, "\cong", "T\udl{\varphi_1}"']\\
	X'  \arrow[r, "\udl{f}'"] &  Y' \arrow[r, "\udl{g}'"] &  Z' \arrow[rr, "\udl{h}'", dashed] && TX'
  \end{tikzcd}
\end{displaymath}
with \tikzcs{ X' \arrow[r, "f'", tail] \& Y' \arrow[r, "g'", two heads] \& Z'} $\in \calS$. By \cite[Chapter I, section 2.7]{Happel} there exists in \E a morphism 
	$h': Z' \to TX'$ 
 such that 
	$X' \xrightarrow{\udl{f}'} Y' \xrightarrow{\udl{g}'} Z' \xrightarrow{\udl{h}'} TX' \in \Delta$ (where $\Delta$ is the collection of distinguished triangles in \uF). 
Define 
	$\udl{h} := (T\udl{\varphi_1})^{-1} \circ \udl{h}' \circ \udl{\varphi_3}$. 
Then 
	$(\varphi_1, \varphi_2, \varphi_3)$ 
is an isomorphism between the triangles 
	$X' \xrightarrow{\udl{f}'} Y' \xrightarrow{\udl{g}'} Z' \xrightarrow{\udl{h}'} TX' \in \Delta$ and $X \xrightarrow{\udl{f}} Y \xrightarrow{\udl{g}} Z \xrightarrow{\udl{h}} TX$. 
Hence the latter triangle is also in $\Delta$.

Now assume that 
	$X \xrightarrow{\udl{f}} Y \xrightarrow{\udl{g}} Z \xrightarrow{\udl{h}} TX \in \Delta$. 
Then it is fairly easy to show that there exist 
	\tikzcs{ X' \arrow[r, "f'", tail] \& Y' \arrow[r, "g'", two heads] \& Z'}
	$\in \calS$ 
and a morphism $h': Z' \to TX'$ such that 
	$X \xrightarrow{\udl{f}} Y \xrightarrow{\udl{g}} Z \xrightarrow{\udl{h}} TX$ and $X' \xrightarrow{\udl{f}'} Y' \xrightarrow{\udl{g}'} Z' \xrightarrow{\udl{h}'} TX'$ 
are isomorphic as triangles. This isomorphism of triangles clearly gives an isomorphism of the sequences 
	$X \xrightarrow{\udl{f}} Y \xrightarrow{\udl{g}} Z$ and $X' \xrightarrow{\udl{f}'} Y' \xrightarrow{\udl{g}'} Z' \in \calS_\N$, hence $X \xrightarrow{\udl{f}} Y \xrightarrow{\udl{g}} Z  \in \calS_\N$.
\end{proof}

Recall that a nonempty full subcategory $\T'$ of a triangulated category $(\T, T, \Delta)$ is a {\bf triangulated subcategory} if the 2 out of 3 property holds. Namely if $X \xrightarrow{f} Y \xrightarrow{g} Z \xrightarrow{h} TX$ is a distinguished triangle in $\T$ and two of the objects $X, Y, Z$ are in $\T'$, then so is the third. Moreover, $\T'$ is a {\bf thick triangulated subcategory} if in addition it is closed under direct summands.

Note that it follows from Lemma~\ref{lem:in calS-N iff in Delta} that a subcategory \uFm of \uF is a triangulated subcategory (respectively, thick triangulated subcategory) if and only if it is a complete subcategory (respectively, thick subcategory) as in the sense of Definition~\ref{def:complete and thick subcat}.
Note also that the Five Lemma for triangulated categories implies that \uF satisfies the Weak Five Lemma for $\calS_\N$.


We are now ready to state Theorem~\ref{thm:Main thm} in the case of a Frobenius category and the associated stable category. The result follows from the above.

\begin{theorem} \label{thm:Main thm Frob}
Let \F be a Frobenius category. 

{\bf (1) } There is a one-to-one correspondence between thick subcategories of \F containing \inj \F and thick triangulated subcategories of the associated stable category \uF given by
\begin{displaymath}
  \begin{tikzcd}
	\{\text{thick subcategories of \F containing \inj \F}\} \ar[r, bend left, "F", start anchor=north east, end anchor=north west] & \ar[l, bend left, "G", start anchor=south west, end anchor=south east] \{\text{thick triangulated subcategories of \uF} \}.
  \end{tikzcd} 
\end{displaymath}

{\bf (2) } There is a one-to-one correspondence between complete subcategories of \F containing \inj \F and triangulated subcategories of the associated stable category \uF given by
\begin{displaymath}
  \begin{tikzcd}
	\{\text{complete subcategories of \F containing \inj \F}\} \ar[r, bend left, "F", start anchor=north east, end anchor=north west] & \ar[l, bend left, "G", start anchor=south west, end anchor=south east] \{\text{triangulated subcategories of \uF} \}.
  \end{tikzcd} 
\end{displaymath}
\end{theorem}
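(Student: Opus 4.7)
The plan is to deduce Theorem~\ref{thm:Main thm Frob} directly from Theorem~\ref{thm:Main thm} applied to the particular choice $\E = \F$ and $\N = \inj \F$, using Lemma~\ref{lem:in calS-N iff in Delta} to translate between ``complete/thick subcategory of $\uF$'' in the sense of Definition~\ref{def:complete and thick subcat} and ``triangulated/thick triangulated subcategory'' in the classical sense.

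First I would check the hypotheses of Theorem~\ref{thm:Main thm}. The subcategory $\inj \F$ is full and closed under finite direct sums (and in fact under direct summands), so it qualifies as the $\N$ in the general setup. For part (2) one needs in addition that $\N$ be factorization admissible and that $\uF$ satisfy the Weak Five Lemma for $\calS_\N$. The first is exactly the content of Example~\ref{ex:inj E factorization admissible}. The second follows from the classical Five Lemma for triangulated categories: given a morphism of sequences in $\calS_\N$, Lemma~\ref{lem:in calS-N iff in Delta} lets us extend both rows to distinguished triangles in $\uF$, and after completing the morphism to a morphism of triangles (using the triangulated axioms) the usual Five Lemma gives the required conclusion in cases (i) and (ii). This is the paragraph briefly alluded to before the theorem statement, and it is the only nontrivial verification.

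Next I would invoke Lemma~\ref{lem:in calS-N iff in Delta}, which says that $\calS_\N$ coincides, up to forgetting the connecting morphism, with the collection of distinguished triangles $\Delta$ in $\uF$. Consequently, a full subcategory $\uFm$ of $\uF$ satisfies the 2-out-of-3 property of Definition~\ref{def:complete and thick subcat}(ii) with respect to $\calS_\N$ if and only if it satisfies the 2-out-of-3 property with respect to distinguished triangles, i.e.\ is a triangulated subcategory. Adding axiom (iii) on the one side and closure under direct summands on the other, thick subcategories of $\uF$ in the sense of Definition~\ref{def:complete and thick subcat} correspond precisely to thick triangulated subcategories. Thus the target collections of Theorem~\ref{thm:Main thm}(1) and (2), specialized to $\F$ and $\inj \F$, are exactly the target collections appearing in Theorem~\ref{thm:Main thm Frob}(1) and (2) respectively.

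Finally I would assemble the conclusion: the maps $F$ and $G$ of Section~3 give the claimed bijections by Theorem~\ref{thm:Main thm}, since the source collections (thick, respectively complete, subcategories of $\F$ containing $\inj \F$) are unchanged by the specialization, and the target collections have been re-identified via Lemma~\ref{lem:in calS-N iff in Delta}. The main (and really only) obstacle is the verification of the Weak Five Lemma for $\uF$; all other steps are formal translations via the already proved results.
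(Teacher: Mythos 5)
Your proposal is correct and follows essentially the same route as the paper: specialize Theorem~\ref{thm:Main thm} to $\E = \F$ and $\N = \inj\F$, verify factorization admissibility via Example~\ref{ex:inj E factorization admissible} and the Weak Five Lemma via the classical Five Lemma for triangulated categories, and use Lemma~\ref{lem:in calS-N iff in Delta} to identify complete (resp.\ thick) subcategories of $\uF$ with triangulated (resp.\ thick triangulated) subcategories. The only additional content in your write-up is the slightly more explicit sketch of how the Weak Five Lemma is deduced from the triangulated Five Lemma (extending both rows to triangles and completing the morphism via the rotation and completion axioms), which the paper merely asserts in a single sentence.
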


\section{The stable category of Gorenstein projective objects of an abelian category}

Fix an abelian category \A with enough projective objects. An object $X$ in \A is a {\bf Gorenstein projective object} if there exists a complex 
	$$\mathbf{P} = \ \ \ (\dots \to P_2 \to P_1 \to P_0 \xrightarrow{f} P_{-1} \to P_{-2} \to \dots)$$
of projective objects in \A, such that $X = \Image f$ and both $\mathbf{P}$ and $\Hom_\A(\mathbf{P}, Q)$ are acyclic for all $Q \in \proj \A$. In this case $\mathbf{P}$ is called a {\bf complete projective resolution} of $X$. We denote by $\Gproj \A$ the full subcategory of \A consisting of all Gorenstein projective objects in \A. By \cite{Beli}, $\Gproj \A$ is a Frobenius category, and the projective-injective objects in $\Gproj \A$ are the projective objects in \A. 

From Theorem~\ref{thm:Main thm Frob} and the above we deduce the following result.

\begin{theorem} \label{thm:Main Gproj}
Let \A be an abelian category with enough projectives, and let $\udl{\Gproj}$ \A denote the stable category associated to the Frobenius category \Gproj \A. 
Then we have the following one-to-one correspondences 
\begin{displaymath}
    \begin{tikzcd}
	\{\text{thick subcategories of \Gproj \A containing \proj \A}\} \ar[d, leftrightarrow, "1-1"] \\  \{\text{thick triangulated subcategories of $\udl{\Gproj}$ \A} \}
  \end{tikzcd} 
\end{displaymath}

\begin{displaymath}
  \begin{tikzcd}
	\{\text{complete subcategories of \Gproj \A containing \proj \A}\} \ar[d, leftrightarrow, "1-1"] \\ \{\text{triangulated subcategories of $\udl{\Gproj}$ \A} \}
  \end{tikzcd}
\end{displaymath}
\end{theorem}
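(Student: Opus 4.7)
The plan is to apply Theorem~\ref{thm:Main thm Frob} directly with the Frobenius category $\F := \Gproj \A$. The preceding paragraph has already supplied the two inputs this needs: by the cited work of Beligiannis, $\Gproj \A$ is a Frobenius category, and moreover its subcategory of projective-injective objects coincides with $\proj \A$. In particular, $\inj(\Gproj \A) = \proj \A$, so in every statement of Theorem~\ref{thm:Main thm Frob} the role of $\inj \F$ is played here by $\proj \A$.

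For the first correspondence, I would simply invoke Theorem~\ref{thm:Main thm Frob}(1) with $\F = \Gproj \A$, which produces a one-to-one correspondence between the thick subcategories of $\Gproj \A$ containing $\inj(\Gproj \A)$ and the thick triangulated subcategories of the stable category $\udl{\Gproj} \A$. Replacing $\inj(\Gproj \A)$ by $\proj \A$ yields exactly the first bijection of the theorem, the mutual inverses being the maps $F$ and $G$ constructed in Section~3.

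The second correspondence is obtained in precisely the same manner from Theorem~\ref{thm:Main thm Frob}(2), which established a bijection between the complete subcategories of a Frobenius category containing its projective-injective subcategory and the triangulated subcategories of the associated stable category. Specializing to $\F = \Gproj \A$ and substituting $\proj \A$ for $\inj(\Gproj \A)$ gives the second bijection.

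There is no real obstacle to overcome at this stage: all the substantive content lies on the one hand in Beligiannis' theorem that $\Gproj \A$ is Frobenius with the correct identification of its projective-injective objects, and on the other hand in the general correspondence Theorem~\ref{thm:Main thm Frob}. The present result is the direct specialization of that machinery, and the proof is therefore essentially a one-line appeal to Theorem~\ref{thm:Main thm Frob} together with the identification $\inj(\Gproj \A) = \proj \A$.
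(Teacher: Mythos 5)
Your proposal is correct and matches the paper's approach exactly: the paper likewise states that the result follows directly from Theorem~\ref{thm:Main thm Frob} combined with Beligiannis' theorem that $\Gproj \A$ is Frobenius with $\inj(\Gproj \A) = \proj \A$.
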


Now let $R$ be a commutative Noetherian ring and denote by $\modR$ the (abelian) category of all finitely generated $R$-modules. To simplify notation we define $\Gproj R := \Gproj \modR$ and $\proj R := \proj \modR$. Recall that a module $M \in \modR$ is {\bf totally reflexive} if the following hold:
\begin{itemize}
	\item[(i)] the natural biduality map $M \to M^{**}$ is an isomorphism,
	\item[(ii)] $\Ext_R^{>0}(M, R) = 0$,
	\item[(iii)] $\Ext_R^{>0}(M^{*}, R) = 0 $,
\end{itemize}
where $M^* := \Hom(M,R)$ denotes the dual module of $M$.
We denote by \GR the full subcategory of \modR consisting of all totally reflexive modules. It is well-known that $\GR = \Gproj R$, thus \GR is Frobenius with the projective $R$-modules as the projective-injective objects. 

Note that if a subcategory \D of \GR is closed under finite direct sums and direct summands, then it contains $\proj R$ if and only if it contains the object $R$. Thus the condition that a thick subcategory must contain $\proj R$ can be replaced by the condition that it must contain the object $R$. Consequently, in this setting, Theorem~\ref{thm:Main Gproj} takes the following form.

\begin{theorem} \label{thm:Main tot ref}
Let $R$ be a commutative Noetherian ring, \GR the Frobenius category of totally reflexive modules, and 
$\udl{\mathcal{G}} (R)$ the corresponding stable category. 
Then we have the following one-to-one correspondences 
\begin{displaymath}
    \begin{tikzcd}
	\{\text{thick subcategories of \GR containing $R$}\} \ar[d, leftrightarrow, "1-1"] \\  \{\text{thick triangulated subcategories of $\udl{\mathcal{G}} (R)$} \}
  \end{tikzcd}
\end{displaymath}
\begin{displaymath}
  \begin{tikzcd}
	\{\text{complete subcategories of \GR containing $\proj R$}\} \ar[d, leftrightarrow, "1-1"] \\ \{\text{triangulated subcategories of $\udl{\mathcal{G}} (R)$} \}
  \end{tikzcd}
\end{displaymath}
\end{theorem}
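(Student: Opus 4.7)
The plan is to deduce this theorem directly from Theorem~\ref{thm:Main Gproj} applied to the abelian category $\A = \modR$. Since $R$ is commutative Noetherian, $\modR$ is an abelian category with enough projectives, and the identification $\GR = \Gproj R = \Gproj \modR$ recorded above makes $\GR$ a Frobenius category whose projective--injective objects are precisely the objects of $\proj R$. With this identification, the second bijection of Theorem~\ref{thm:Main tot ref} is literally the second bijection of Theorem~\ref{thm:Main Gproj}, so nothing new needs to be proved in that case.

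For the first (thick) bijection, I would first apply Theorem~\ref{thm:Main Gproj} to obtain a one-to-one correspondence between thick subcategories of $\GR$ containing $\proj R$ and thick triangulated subcategories of $\udl{\mathcal{G}}(R)$. It then suffices to check that, for any full subcategory $\D \subseteq \GR$ closed under finite direct sums and direct summands, the two conditions ``$\proj R \subseteq \D$'' and ``$R \in \D$'' are equivalent. The forward implication is immediate because $R \in \proj R$. For the converse, if $R \in \D$ then closure under finite direct sums yields $R^n \in \D$ for every $n \ge 0$, and since every finitely generated projective $R$-module is a direct summand of some $R^n$, closure under direct summands delivers $\proj R \subseteq \D$.

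To justify applying this replacement inside the class of thick subcategories in the sense of Definition~\ref{def:complete and thick subcat}, I would note that any such subcategory is nonempty and contains $0$, and by the remark following Definition~\ref{def:complete and thick subcat} it is automatically closed under finite direct sums via the 2 out of 3 property applied to the split short exact sequence $A \to A \oplus B \to B$. Combined with condition~(iii), the hypotheses of the previous paragraph are in place, so the condition ``$\proj R \subseteq \D$'' may be replaced by ``$R \in \D$'' in the thick case. This is the only step not already contained in Theorem~\ref{thm:Main Gproj}, and I expect no real obstacle beyond this small piece of bookkeeping. It also explains why the analogous substitution is not made for merely complete subcategories (which are not assumed closed under direct summands), so the second bijection must retain the stronger condition $\proj R \subseteq \D$.
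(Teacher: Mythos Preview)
Your proposal is correct and follows exactly the approach the paper takes: deduce the result from Theorem~\ref{thm:Main Gproj} with $\A=\modR$, and in the thick case replace the condition $\proj R\subseteq\D$ by $R\in\D$ using closure under finite direct sums and direct summands. Your write-up is in fact more detailed than the paper's (you spell out why every finitely generated projective is a summand of some $R^n$ and why thick subcategories are closed under finite direct sums), but the argument is the same.
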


Specializing further, let now $(R, \m, k)$ be a commutative Noetherian local ring. Recall that the depth of an $R$-module $M$ is 
	$$\depth M = \inf \{ n \geq 0 \ | \ \Ext^n _R (k, M) \neq 0\},$$
and $M$ is a maximal Cohen-Macaulay module if $M=0$ or $\depth M = \dim R$ (where $\dim R$ denotes the Krull dimention of $R$). We denote by $\CM(R)$ the full subcategory of $\mmod R$ consisting of all maximal Cohen-Macaulay modules over $R$. 
	
The ring $R$ is a Gorenstein local ring if it has finite injective dimension as a module over itself. The maximal Cohen-Macaulay modules over a Gorenstein local ring are precisely the totally reflexive modules (cf.\ \cite{Buch} or \cite[Theorem 4.8]{Ryo}). Thus $\CM(R) = \GR = \Gproj R$, and $\CM(R)$ is a Frobenius category. Consequently, in this setting, Theorem~\ref{thm:Main tot ref} takes the following form. The ``thick'' part of this result is known, c.f\ \cite{RyoCM}.

\begin{theorem} 
Let $R$ be a Gorenstein local ring, $\CM(R)$ the Frobenius category of maximal Cohen-Macaulay modules, and $\udl{\CM} (R)$ the corresponding stable category. 
Then we have the following one-to-one correspondences 
\begin{displaymath}
  \begin{tikzcd}
	\{\text{complete subcategories of $\CM(R)$ containing $\proj R$}\} \ar[d, leftrightarrow, "1-1"] \\ \{\text{triangulated subcategories of $\udl{\CM} (R)$} \}
  \end{tikzcd}
\end{displaymath}

\begin{displaymath}
    \begin{tikzcd}
	\{\text{thick subcategories of $\CM(R)$ containing $R$}\} \ar[d, leftrightarrow, "1-1"] \\  \{\text{thick triangulated subcategories of $\udl{\CM} (R)$} \}
  \end{tikzcd}
\end{displaymath}
\end{theorem}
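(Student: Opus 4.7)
The plan is to derive this statement as a direct specialization of Theorem~\ref{thm:Main tot ref}. The only new ingredient compared with that theorem is the identification of $\CM(R)$ with $\GR$ over a Gorenstein local ring, which is the classical result of Buchweitz \cite{Buch} (see also \cite[Theorem 4.8]{Ryo}) already quoted in the paragraph preceding the statement.

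First I would record the equality $\CM(R) = \GR = \Gproj R$ as full subcategories of $\mmod R$. Because this is an \emph{equality} of subcategories (and not merely an equivalence of categories), the induced exact structures on $\CM(R)$ and on $\GR$ coincide, their classes of $\calS$-projective and $\calS$-injective objects are both equal to $\proj R$, and the two Frobenius structures match on the nose. In particular the stable categories $\udl{\CM}(R)$ and $\udl{\mathcal{G}}(R)$ are literally the same triangulated category, so they share the same collection of triangulated subcategories and the same collection of thick triangulated subcategories.

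Second, I would verify the harmless reduction that allows one to write ``$R$'' in place of ``$\proj R$'' in the thick part. Any thick subcategory of $\CM(R)$ is by definition closed under direct summands and, being complete, is additive and hence closed under finite direct sums. Since every object of $\proj R$ is a direct summand of a finite direct sum of copies of $R$, containing $R$ is equivalent to containing all of $\proj R$ for such subcategories. Thus ``thick subcategory of $\CM(R)$ containing $R$'' means exactly ``thick subcategory of $\CM(R)$ containing $\proj R$''.

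With these identifications in place, both bijections of the theorem are obtained by reading off the corresponding bijections from Theorem~\ref{thm:Main tot ref} (applied with the role of $\GR$ played by $\CM(R)$, and the role of $\udl{\mathcal{G}}(R)$ played by $\udl{\CM}(R)$), together with the maps $F$ and $G$ constructed in Section~3. I expect no real obstacle: the content of the argument is the Gorenstein identification $\CM(R) = \GR$, and everything else is bookkeeping. The thick half recovers the known classification in \cite{RyoCM}, and the complete half is the new specialization provided by our framework.
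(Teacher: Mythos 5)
Your proposal is correct and matches the paper's approach: the theorem is obtained by specializing Theorem~\ref{thm:Main tot ref} via the Buchweitz identification $\CM(R) = \GR$ for Gorenstein local rings, and the paper does exactly this. One small redundancy: the reduction replacing ``$\proj R$'' by ``$R$'' in the thick part has already been carried out in the statement of Theorem~\ref{thm:Main tot ref} itself, so you do not need to re-verify it here; simply reading off the two bijections after the identification suffices.
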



%
%

\bibliographystyle{plain}

%
%
\end{document}